\DeclareMathOperator{\sgn}{\mathrm{sgn}}
\begin{document}
 \bibliographystyle{plain}

 \newtheorem{theorem}{Theorem}
 \newtheorem{lemma}{Lemma}
 \newtheorem{corollary}{Corollary}
 \newtheorem{conjecture}{Conjecture}
 \newtheorem{definition}{Definition}
 \newcommand{\mc}{\mathcal}
 \newcommand{\zpz}{\mathbb{Z}/p\mathbb{Z}}
 \newcommand{\mbb}{\mathbb}
 \newcommand{\A}{\mc{A}}
 \newcommand{\B}{\mc{B}}
 \newcommand{\cc}{\mc{C}}
 \newcommand{\D}{\mc{D}}
 \newcommand{\E}{\mc{E}}
 \newcommand{\F}{\mc{F}}
 \newcommand{\I}{\mc{I}}
 \newcommand{\J}{\mc{J}}
 \newcommand{\K}{\mc{K}}
 \newcommand{\M}{\mc{M}}
 \newcommand{\nn}{\mc{N}}
 \newcommand{\qq}{\mc{Q}}
 \newcommand{\U}{\mc{U}}
 \newcommand{\X}{\mc{X}}
 \newcommand{\Y}{\mc{Y}}
 \newcommand{\itQ}{\mc{Q}}
 \newcommand{\C}{\mathbb{C}}
 \newcommand{\R}{\mathbb{R}}
 \newcommand{\N}{\mathbb{N}}
 \newcommand{\Q}{\mathbb{Q}}
 \newcommand{\T}{\mathbb{T}}
 \newcommand{\oU}{\overline U}
 \newcommand{\Z}{\mathbb{Z}}
 \newcommand{\ff}{\mathfrak F}
 \newcommand{\fb}{f_{\beta}}
 \newcommand{\fg}{f_{\gamma}}
 \newcommand{\gb}{g_{\beta}}
 \newcommand{\h}{\frac12}
 \newcommand{\hh}{\tfrac12}
 \newcommand{\balpha}{\boldsymbol \alpha}
 \newcommand{\bb}{\boldsymbol b}
 \newcommand{\bbbeta}{\boldsymbol \beta}
 \newcommand{\be}{\boldsymbol e}
 \newcommand{\bg}{\boldsymbol g}
 \newcommand{\bj}{\boldsymbol j}
 \newcommand{\bk}{\boldsymbol k}
 \newcommand{\bm}{\boldsymbol m}
 \newcommand{\bgamma}{\boldsymbol \gamma}
 \newcommand{\bo}{\boldsymbol 0}
 \newcommand{\bt}{\boldsymbol t}
 \newcommand{\bu}{\boldsymbol u}
 \newcommand{\bv}{\boldsymbol v}
 \newcommand{\bx}{\boldsymbol x}
 \newcommand{\bwy}{\boldsymbol y}
 \newcommand{\bxi}{\boldsymbol \xi}
 \newcommand{\bbeta}{\boldsymbol \eta}
 \newcommand{\bw}{\boldsymbol w}
 \newcommand{\bz}{\boldsymbol z}
 \newcommand{\vphi}{\varphi}
 \newcommand{\dx}{\text{\rm d}\bx}
 \newcommand{\dy}{\text{\rm d}\bwy}
 \newcommand{\dmu}{\text{\rm d}\mu}
 \newcommand{\dnu}{\text{\rm d}\nu}
 \def\today{\number\time, \ifcase\month\or
  January\or February\or March\or April\or May\or June\or
  July\or August\or September\or October\or November\or December\fi
  \space\number\day, \number\year}
  
\title[group actions]{A Dirichlet approximation theorem\\for group actions}
\author{Clayton~Petsche}
\author{Jeffrey~D.~Vaaler}
\subjclass[2010]{11J25, 37B05, 22F10}
\keywords{unitary group, continuous group actions}
\thanks{This research was supported by NSA grant, H98230-12-1-0254.}

\address{Department of Mathematics, Oregon State University, Corvallis, Oregon 97331 USA}
\email{petschec@math.oregonstate.edu}

\address{Department of Mathematics, University of Texas, Austin, Texas 78712 USA}
\email{vaaler@math.utexas.edu}
%\allowdisplaybreaks
\numberwithin{equation}{section}

%%%%%%%%%%%%%%%%%%%%%%%%%%%%%%%%%%%%%%%%%%%%%%%%%%%%%%%%%%%%%%
\begin{abstract}  If $G$ is a compact group acting continuously on a compact metric space $(X, m)$, we prove two
results that generalize Dirichlet's classical theorem on Diophantine approximation.  If $G$ is a noncommutative 
compact group of isometries, we obtain a noncommutative form of Dirichlet's theorem.  We apply our  general result to the 
special case of the unitary group $U(N)$ acting on the complex unit sphere, and obtain a noncommutative result in this setting. 
\end{abstract}

\maketitle

%%%%%%%%%%%%%%%%%%%%%%%%%%%%%%%%%%%%%%%%%%%%%%%%%%%%%%%%%%%%%%%
\section{Introduction}

Let $G$ be a compact topological group that acts continuously and faithfully on a compact metric space $\{X,m\}$.  If 
$\A \subseteq G$ is a finite subset containing at least two points, we prove two results that establish the existence of a 
nonidentity element in the difference set 
\begin{equation*}\label{intro1}
\big\{a b^{-1} : a \in \A,\ b\in \A,\ \text{and $a \not= b$}\big\}
\end{equation*}
that moves the points of $X$ minimally.  In the special case of the group $G = (\R/\Z)^N$ acting on $X = (\R/\Z)^N$ by 
translation, our results reproduce Dirichlet's classical theorem on Diophantine approximation.  If $G$ acts 
continuously as a group of isometries on $\{X, m\}$, we obtain a noncommutative form of Dirichlet's theorem.   

Before developing the general setting in which we formulate our results, we consider the special case of the unitary
group $G = U(N)$ of $N \times N$ unitary matrices.  The group $U(N)$ acts continuously and faithfully on the set
\begin{equation}\label{intro5}
X = \{\bx \in \C^N : |\bx|_2 = 1\},
\end{equation}  
where
\begin{equation*}\label{intro9}
|\bx|_2 = \bigl(|x_1|^2 + |x_2|^2 + \cdots + |x_N|^2\bigl)^{\h}
\end{equation*}
is the standard Hermitian norm on (column) vectors $\bx$ in $\C^N$.  If $\bx$ and $\bwy$ are points of $X$, we use the
metric $m$ given by
\begin{equation*}\label{intro13}
m(\bx, \bwy) = |\bx - \bwy|_2.
\end{equation*}
Then a unitary matrix $A$ in $U(N)$ acts on vectors $\bx$ in $X$ by $(A, \bx) \mapsto A\bx$, and it is easy to verify that this action
is an isometry.  We define a function $\vphi : U(N) \rightarrow [0, \infty)$ by
\begin{equation}\label{intro17}
\vphi(A) = \sup\big\{|A\bx - \bx|_2 : \bx \in X\big\},
\end{equation}
so that $\vphi(A)$ measures the maximum distance that $A$ moves a point $\bx$ in $X$.  It is instructive to observe that the
map $(A, B) \mapsto \vphi\bigl(A B^{-1}\bigr)$ defines a metric on the compact group $U(N)$, and this metric induces its group topology.

\begin{theorem}\label{thmintro1}  Let $\A \subseteq U(N)$ be a finite subset of cardinality $|\A| \ge 2$.  If
\begin{equation}\label{intro23}
\delta(\A) = \min\big\{\vphi\bigl(A B^{-1}\bigr): A\in\A,\ B\in\A,\ \text{and $A\not= B$}\big\},
\end{equation} 
then we have
\begin{equation}\label{intro33}
\delta(\A) \le 2 \pi |\A|^{-1/N^2}.
\end{equation}
\end{theorem}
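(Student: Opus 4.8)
The plan is to prove Theorem~\ref{thmintro1} by a Dirichlet-style pigeonhole argument carried out directly on the compact group $U(N)$, using its normalized Haar measure $\mu$ and the metric $d(A,B)=\vphi(AB^{-1})$. Here is the skeleton. Assume for contradiction that $\delta(\A)>2\pi|\A|^{-1/N^2}$, and set $r=\pi|\A|^{-1/N^2}$, so $\delta(\A)>2r$. For each $A\in\A$ the open set $U_A=\{C\in U(N):\vphi(CA^{-1})<r\}$ is the right translate by $A$ of $W_r=\{C:\vphi(C)<r\}$, so $\mu(U_A)=\mu(W_r)$ by right-invariance of Haar measure. If some $C$ lay in $U_A\cap U_{A'}$ with $A\neq A'$ in $\A$, then by the triangle inequality and symmetry of $d$ we would get $\vphi(AA'^{-1})=d(A,A')<2r<\delta(\A)$, contradicting the definition of $\delta(\A)$; hence the sets $U_A$, $A\in\A$, are pairwise disjoint, and comparing measures yields $|\A|\,\mu(W_r)\le\mu(U(N))=1$. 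Everything therefore reduces to showing $\mu(W_r)>|\A|^{-1}=(r/\pi)^{N^2}$. (One may assume $2\pi|\A|^{-1/N^2}<2$, i.e.\ $r<1$; otherwise \eqref{intro33} is immediate, since $\vphi(A)=\|A-I\|\le 2$ for every $A$, so $\delta(\A)\le 2$.)

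The next step is to describe $W_r$ spectrally. From $\vphi(A)=\sup_{|\bx|_2=1}|(A-I)\bx|_2=\|A-I\|$, the operator norm, together with the diagonalization $A=VDV^{*}$ with $D=\mathrm{diag}(e^{i\theta_1},\dots,e^{i\theta_N})$ and $\theta_j\in(-\pi,\pi]$, one obtains $\vphi(A)=\|D-I\|=\max_j 2\,|\sin(\theta_j/2)|$. Hence, for $0<r<2$, $W_r$ is precisely the conjugation-invariant set of all $A$ whose eigenvalue-angles lie in $(-\beta,\beta)$, where $\beta=\beta(r)=2\arcsin(r/2)$; note $\beta(r)>r$ because $\arcsin t>t$ for $0<t<1$.

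The crux is the lower bound $\mu(W_r)\ge(\beta(r)/\pi)^{N^2}$, which I would obtain from Weyl's integration formula in the form $\mu(W_r)=\frac{1}{N!\,(2\pi)^N}\int_{(-\beta,\beta)^N}\prod_{1\le j<k\le N}|e^{i\theta_j}-e^{i\theta_k}|^2\,d\theta_1\cdots d\theta_N$ with $\beta=\beta(r)\le\pi$. Scaling $\theta_j=(\beta/\pi)\,\eta_j$ pulls a factor $(\beta/\pi)^N$ out of the differentials and turns the integrand into $\prod_{j<k}4\sin^2(\tfrac{\beta}{\pi}\cdot\tfrac{\eta_j-\eta_k}{2})$; since $t\mapsto(\sin t)/t$ is positive and decreasing on $(0,\pi)$, each factor is at least $(\beta/\pi)^2\cdot 4\sin^2(\tfrac{\eta_j-\eta_k}{2})$, which contributes a further $(\beta/\pi)^{2\binom{N}{2}}=(\beta/\pi)^{N^2-N}$ and leaves the normalization integral $\int_{(-\pi,\pi)^N}\prod_{j<k}|e^{i\eta_j}-e^{i\eta_k}|^2\,d\boldsymbol\eta=N!\,(2\pi)^N$. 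Since $N+(N^2-N)=N^2$, this gives exactly $\mu(W_r)\ge(\beta(r)/\pi)^{N^2}$.

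Combining the pieces, $\mu(W_r)\ge(\beta(r)/\pi)^{N^2}>(r/\pi)^{N^2}=|\A|^{-1}$, which contradicts $|\A|\,\mu(W_r)\le 1$ and proves \eqref{intro33}. I expect the spectral measure estimate to be the main obstacle, and in particular getting the constant sharp: the factor $2\pi$ in \eqref{intro33} appears exactly because the monotonicity of $(\sin t)/t$ extracts the full power $(\beta/\pi)^{N^2}$ against the precise Weyl normalization, while the strict inequality the pigeonhole needs is furnished by the elementary bound $\arcsin t>t$.
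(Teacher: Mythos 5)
Your proposal is correct and takes essentially the same approach as the paper: the same Blichfeldt-style pigeonhole via disjoint right translates of $W_r$ (this is the paper's Theorem~\ref{thm3} specialized to $U(N)$), the same spectral identification $\vphi(A)=\max_j 2|\sin(\theta_j/2)|$ (Lemma~\ref{lemone1}), the same use of the Weyl integration formula with the rescaling $\theta=(\beta/\pi)\eta$ and the monotonicity of $\sin t/t$ to extract $(\beta/\pi)^{N^2}$ (Lemmas~\ref{lemone2} and~\ref{lemone3}, where the paper's $2\pi w$ is your $\beta(r)$ and $t=2\sin\pi w\le 2\pi w$ is your $\arcsin t\ge t$). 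The only differences are cosmetic: you argue by contradiction directly on $U(N)$ rather than first proving the general compact-group inequality $\Phi(\tfrac12\delta(\A))\le|\A|^{-1}$ and then invoking the lower bound $(t/\pi)^{N^2}\le\Phi(t)$.
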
 

As an application of Theorem \ref{thmintro1}, we obtain the following noncommutative form of Dirichlet's theorem.

\begin{theorem}\label{thmintro2}  Let $A$ and $B$ be matrices in the unitary group $U(N)$, and let $J$ and $K$ be positive integers.  If
\begin{equation}\label{intro43}
\delta_{J, K}(A, B) = \min\big\{\vphi\bigl(A^j B^k\bigr) : \text{$|j| \le J$, $|k| \le K$, and $(j, k) \not= (0, 0)$}\big\},
\end{equation}
then
\begin{equation}\label{intro49}
\delta_{J, K}(A, B) \le 2\pi (J + 1)^{-1/N^2} (K + 1)^{-1/N^2}.
\end{equation}
\end{theorem}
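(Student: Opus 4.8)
The plan is to deduce Theorem~\ref{thmintro2} from Theorem~\ref{thmintro1} by a Dirichlet-style pigeonhole argument applied to the finite set
\begin{equation*}
\A = \bigl\{A^j B^k : 0 \le j \le J,\ 0 \le k \le K\bigr\} \subseteq U(N),
\end{equation*}
which has $|\A| \le (J+1)(K+1)$. The one fact about $\vphi$ that I would isolate at the outset is that it is constant on conjugacy classes: since $U(N)$ acts on $X$ by isometries, replacing $\bx$ by $g^{-1}\bx$ in the supremum \eqref{intro17} gives $\vphi(gCg^{-1}) = \vphi(C)$ for all $g, C \in U(N)$, and consequently $\vphi(XY) = \vphi(YX)$ for all $X, Y \in U(N)$, since $YX = X^{-1}(XY)X$.

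First I would dispose of the degenerate case in which the products $A^j B^k$, $0 \le j \le J$, $0 \le k \le K$, are not pairwise distinct. A coincidence $A^{j_1} B^{k_1} = A^{j_2} B^{k_2}$ with $(j_1, k_1) \ne (j_2, k_2)$ forces $A^{j_1 - j_2} = B^{k_2 - k_1}$, hence $A^{j_1 - j_2} B^{k_1 - k_2} = I$; writing $j = j_1 - j_2$ and $k = k_1 - k_2$, which are not both zero and satisfy $|j| \le J$, $|k| \le K$, we get $\vphi(A^j B^k) = \vphi(I) = 0$, so $\delta_{J, K}(A, B) = 0$ and \eqref{intro49} holds trivially.

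In the remaining case $|\A| = (J+1)(K+1) \ge 2$, Theorem~\ref{thmintro1} produces distinct elements $P = A^{j_1} B^{k_1}$ and $Q = A^{j_2} B^{k_2}$ of $\A$ with
\begin{equation*}
\vphi\bigl(P Q^{-1}\bigr) \le 2\pi\,|\A|^{-1/N^2} = 2\pi (J+1)^{-1/N^2}(K+1)^{-1/N^2}.
\end{equation*}
Now $P Q^{-1} = A^{j_1} B^{k_1 - k_2} A^{-j_2}$, which is not literally a product of the form $A^j B^k$; but applying the identity $\vphi(XY) = \vphi(YX)$ twice converts $\vphi\bigl(A^{j_1} B^{k_1 - k_2} A^{-j_2}\bigr)$ first into $\vphi\bigl(B^{k_1 - k_2} A^{j_1 - j_2}\bigr)$ and then into $\vphi\bigl(A^{j_1 - j_2} B^{k_1 - k_2}\bigr)$. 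Taking $j = j_1 - j_2$ and $k = k_1 - k_2$, so that $|j| \le J$, $|k| \le K$, and $(j, k) \ne (0, 0)$ because $P \ne Q$, we conclude $\delta_{J, K}(A, B) \le \vphi(A^j B^k) = \vphi\bigl(P Q^{-1}\bigr)$, which is exactly the asserted bound \eqref{intro49}.

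The argument is short once the set $\A$ is chosen correctly, and the only real subtlety --- the point at which noncommutativity could have obstructed the proof --- is the observation that the stray conjugating factors $A^{j_1}$ and $A^{-j_2}$ in $P Q^{-1}$ are invisible to $\vphi$ because $\vphi$ descends to a class function on $U(N)$. I do not expect a genuine obstacle beyond keeping this bookkeeping clean: no new estimate is required, since all of the arithmetic is inherited directly from Theorem~\ref{thmintro1}.
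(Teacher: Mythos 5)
Your argument is correct, and it is essentially the paper's own proof reorganized. The paper derives Theorem~\ref{thmintro2} by specializing the general noncommutative result Theorem~\ref{thm4} (whose proof contains exactly your conjugacy-class manipulation $\vphi(a^{k_1}b^{l_1}(a^{k_2}b^{l_2})^{-1}) = \vphi(a^{k_1-k_2}b^{l_1-l_2})$, justified by (\ref{group75})--(\ref{group85})) and then applying the distribution-function bound of Lemma~\ref{lemone3}; you instead apply the same conjugacy trick directly to the concrete set $\A=\{A^jB^k\}$ and feed the result into Theorem~\ref{thmintro1}, which already packages Theorem~\ref{thm3} with Lemma~\ref{lemone3}. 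The ingredients, the degenerate-case dispatch, and the crucial observation that $\vphi$ is a class function because $U(N)$ acts by isometries are all identical.
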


If $A$, $B$, and $C$, are three matrices in $U(N)$, and $J$, $K$, and $L$ are positive integers, we can form the nonnegative number
\begin{align*}\label{intro53}
\begin{split}
\delta_{J, K, L}(A, B, C) &= \min\big\{\vphi\bigl(A^j B^k C^{\ell}\bigr) : \text{$|j| \le J$, $|k| \le K$, $|\ell| \le L$,}\\ 
				      &\qquad\qquad\qquad\qquad \text{and $(j, k, \ell) \not= (0, 0, 0)$}\big\}.
\end{split}
\end{align*}
It would be of interest to give an upper bound for this quantity that is analogous to (\ref{intro49}).   But such a generalization 
is not obvious using the methods developed here.

In section 2 we describe the setting in which we prove our general results.  This will be
familiar to researchers on continuous group actions.  We include it mainly for convenience and to establish notation.
In section 3 we prove our general form of Dirichlet's theorem stated as Theorem \ref{thm3}.  And in 
section 4 we prove a noncommutative version of this result which we state as Theorem \ref{thm4}.
In section 5 we show that Theorem \ref{thm3} does in fact reproduce Dirichlet's classical 
theorem in Diophantine approximation when $G = (\R/\Z)^N$ acts on $X = (\R/\Z)^N$ by 
translation.  The final section contains proofs of Theorem \ref{thmintro1} and Theorem \ref{thmintro2} for the unitary group.

%%%%%%%%%%%%%%%%%%%%%%%%%%%%%%%%%%%%%%%%%%%%%%%%%%%%%%%%%%%%%%%

\section{Continuous actions of compact groups}

Let $G$ be a compact topological group and write $e$ for the identity element in $G$.  Let $\{X, m\}$ be a compact metric space, and assume that 
$G\times X\rightarrow X$ is a faithful, continuous action of $G$ on $X$, which we denote by $(g, x)\mapsto gx$.  
As the maps $(g,x)\mapsto gx$ and $(g,x)\mapsto x$ are both continuous, it follows that $(g,x)\mapsto (gx,x)$ is a continuous map from $G\times X$ into 
$X\times X$.  Then 
\begin{equation*}\label{group31}
(g,x)\mapsto m(gx,x)
\end{equation*} 
is a continuous map from $G\times X$ into $[0,\infty)$.  Since $X$ is compact,
\begin{equation}\label{group35}
\vphi(g) = \sup\{m(gx,x): x\in X\}
\end{equation}
is finite, and therefore (\ref{group35}) defines a map $\vphi:G\rightarrow [0,\infty)$.  The map $\vphi$ will play a basic role in 
this work.

We derive some elementary properties of the map $\vphi$.  If $g$ and $h$ are in $G$, we have
\begin{equation}\label{group41}
\vphi(g^{-1}) = \sup\{m(g^{-1}x,x): x\in X\} = \sup\{m(x,gx): x\in X\} = \vphi(g),
\end{equation}
and 
\begin{equation}\label{group45}
\begin{split}
\vphi(gh^{-1}) &= \sup\{m(gh^{-1}x,x): x\in X\}\\
	&= \sup\{m(gx,hx): x\in X\}\\
	&\le \sup\{m(gx,x) + m(x,hx): x\in X\}\\
	&\le \sup\{m(gx,x): x\in X\} + \sup\{m(y,hy): y\in X\}\\
	&= \vphi(g) + \vphi(h). 
\end{split}\end{equation}
If the metric $m$ is nonarchimedean, that is, if $m$ satisfies the strong triangle inequality
\begin{equation*}\label{group51}
m(x,y) \le \max\{m(x,z), m(z,y)\}
\end{equation*}
for all $x$, $y$, and $z$ in $X$, then we get the corresponding nonarchimedean inequality
\begin{equation}\label{group55}
\vphi(gh^{-1}) \le \max\{\vphi(g), \vphi(h)\}.
\end{equation}

Using (\ref{group41}) and (\ref{group45}) we find that
\begin{equation*}\label{group61}
\vphi(g) = \vphi(gh^{-1}h) \le \vphi(gh^{-1}) + \vphi(h), 
\end{equation*}
and similarly
\begin{equation*}\label{group65}
\vphi(h) = \vphi(hg^{-1}g) \le \vphi(hg^{-1}) + \vphi(g) = \vphi(gh^{-1}) + \vphi(g). 
\end{equation*}
Hence we get
\begin{equation}\label{group71}
\bigl|\vphi(g) - \vphi(h)\bigr| \le \vphi(gh^{-1}).
\end{equation}

Let $\rho:G\times G\rightarrow [0,\infty)$ be defined by $\rho(g,h) = \vphi(gh^{-1})$.  Because $G$ acts faithfully, $\rho(g,h) = 0$ if
and only if $g = h$.  The identity (\ref{group41}) shows that $\rho(g,h) = \rho(h,g)$.  If $g$, $h$ and $k$ are elements of $G$ then
using (\ref{group45}) we get 
\begin{equation*}\label{group101}
\rho(g,h) = \vphi\bigl(gk^{-1}(hk^{-1})^{-1}\bigr) \le \vphi(gk^{-1}) + \vphi(hk^{-1}) = \rho(g,k) + \rho(k,h).
\end{equation*}
This shows that $\rho$ is a metric on $G$ that is induced by the metric $m$ on $X$.  If $m$ is nonarchimedean, then it follows 
using (\ref{group55}) that $\rho$ is also nonarchimedean.  

Of course, the metric $\rho$ does not necessarily induce the group topology in $G$.  We would like to work in a situation where $\rho$ induces a metric topology in which every $\rho$-open 
set is also open in the group topology of $G$.  That is, $\rho$ induces a topology in $G$ that is weaker than the group topology.  In order for this to happen it is necessary and sufficient that $\vphi$ be a continuous map.  This follows from the following general principle.

\begin{lemma}\label{lem3}
Assume that $Y$ is a locally compact Hausdorff space, $Z$ is a compact Hausdorff space, and $f:Y\times Z\rightarrow \R$ is a 
continuous map.  Then the function $g:Y\rightarrow \R$ defined by
\begin{equation*}\label{group171}
g(y) = \sup\{f(y,z): z\in Z\}
\end{equation*}
is continuous.
\end{lemma}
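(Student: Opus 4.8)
The plan is to prove that $g$ is both upper and lower semicontinuous at an arbitrary point $y_0\in Y$, which together give continuity at $y_0$. First I would record that $g$ is well defined and finite: for each fixed $y$ the map $z\mapsto f(y,z)$ is continuous on the compact space $Z$, hence bounded, and it actually attains its supremum at some point of $Z$. This last observation is what powers the lower semicontinuity estimate.

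For upper semicontinuity, fix $\varepsilon>0$. For each $z\in Z$, continuity of $f$ at $(y_0,z)$ furnishes open sets $V_z\ni y_0$ and $W_z\ni z$ with $f(y',z')<f(y_0,z)+\varepsilon\le g(y_0)+\varepsilon$ for all $(y',z')\in V_z\times W_z$. The family $\{W_z:z\in Z\}$ is an open cover of $Z$; by compactness extract a finite subcover $W_{z_1},\dots,W_{z_n}$ and put $V=\bigcap_{i=1}^{n}V_{z_i}$, an open neighborhood of $y_0$. For $y\in V$ and arbitrary $z\in Z$ we have $z\in W_{z_i}$ for some $i$, hence $(y,z)\in V_{z_i}\times W_{z_i}$ and $f(y,z)<g(y_0)+\varepsilon$; taking the supremum over $z\in Z$ gives $g(y)\le g(y_0)+\varepsilon$ for every $y\in V$.

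For lower semicontinuity, fix $\varepsilon>0$ and choose $z_0\in Z$ with $f(y_0,z_0)=g(y_0)$. Continuity of $f$ at $(y_0,z_0)$ provides an open neighborhood $V'$ of $y_0$ with $f(y,z_0)>f(y_0,z_0)-\varepsilon=g(y_0)-\varepsilon$ for all $y\in V'$, and since $g(y)\ge f(y,z_0)$ we obtain $g(y)>g(y_0)-\varepsilon$ on $V'$. Combining the two estimates on the open neighborhood $V\cap V'$ of $y_0$ yields $|g(y)-g(y_0)|\le\varepsilon$ there; running the argument instead with $\varepsilon/2$ gives strict inequality, and as $\varepsilon>0$ was arbitrary, $g$ is continuous at $y_0$.

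I expect the upper semicontinuity step to be the only one demanding any care: it is precisely where the compactness of $Z$ is used, via passage to a finite subcover to produce a single neighborhood $V$ of $y_0$ that works uniformly in $z$ — essentially the tube lemma. The hypothesis that $Y$ is locally compact Hausdorff is not actually needed for this argument; only the basic manipulations with open neighborhoods above are required.
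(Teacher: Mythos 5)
Your proof is correct, and it takes a genuinely different route from the paper's. The paper first uses local compactness of $Y$ to pass to a compact neighborhood $K$ of a given point, covers all of $K\times Z$ (not just $\{y_0\}\times Z$) by a finite family of product neighborhoods on which $f$ oscillates by less than $\epsilon$, and then constructs a single continuous function $h(k)=\max_n f(k,z_n)$ on $K$ satisfying $0\le g-h<2\epsilon$; continuity of $g$ on $K$ follows because $g$ is a uniform limit of continuous functions. Your argument instead splits continuity into upper and lower semicontinuity at a fixed point $y_0$: for the upper bound you cover only the slice $\{y_0\}\times Z$ and apply the tube-lemma idea (finite subcover of $Z$, intersect the finitely many $V_{z_i}$) to get a uniform estimate $g\le g(y_0)+\epsilon$ near $y_0$; for the lower bound you use that the supremum over the compact $Z$ is attained at some $z_0$ and pull back a neighborhood of $(y_0,z_0)$. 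The two approaches use compactness of $Z$ in essentially the same place, but yours is more local and more economical: it avoids the auxiliary function $h$, avoids the uniform-approximation step, and — as you correctly point out — never uses local compactness (or any separation axiom) on $Y$, so it proves a slightly more general statement. Since the lemma is only ever invoked with $Y=G$ a compact group, the extra generality is not exploited in the paper, but it is a real simplification. One minor stylistic remark: rather than closing with the $\epsilon/2$ device to convert $\le\epsilon$ into $<\epsilon$, you could simply note that a function is continuous iff for every $\epsilon>0$ there is a neighborhood on which the oscillation is at most $\epsilon$; the content is the same.
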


\begin{proof}  Let $y$ be a point in $Y$.  By local compactness there exists an open neighborhood $U$ of $y$ and a compact set $K$
such that $U\subseteq K \subseteq Y$.  Then it suffices to show that the restriction of $g$ to $K$ is continuous.

Let $\epsilon > 0$ and for each point $(k, z)$ in $K\times Z$ let
\begin{equation*}\label{group175}
B(k, z) = \{(\alpha, \beta)\in K\times Z: |f(k, z) - f(\alpha, \beta)| < \epsilon\}.
\end{equation*}
Then $B(k, z)$ is an open neighborhood of $(k, z)$.  Let $U(k)$ be an open neighborhood of $k$ and let $V(z)$ be an open neighborhood
of $z$ such that $U(k)\times V(z)\subseteq B(k, z)$.  The collection of open sets
\begin{equation*}\label{group181}
\big\{U(k)\times V(z): (k, z)\in K\times Z\big\}
\end{equation*}
covers the compact space $K\times Z$, and so there exists a finite subcover
\begin{equation*}\label{group185}
\big\{U(k_n)\times V(z_n): (k_n, z_n)\in K\times Z\ \text{and}\ n = 1, 2, \dots , N\big\}.
\end{equation*}
Define a continuous function $h:K\rightarrow \R$ by
\begin{equation*}\label{group191}
h(k) = \max\{f(k, z_n): n = 1, 2, \dots , N\}.
\end{equation*}
Since $Z$ is compact, at each point $k$ in $K$ there exists a point $\zeta = \zeta(k)$ in $Z$ such that $g(k) = f(k, \zeta)$.  Then
there exists an integer $m$ with $1 \le m \le N$ such that $(k, \zeta)$ belongs to $U(k_m)\times V(z_m)$.  It follows that
\begin{align*}\label{group195}
\begin{split}
0 &\le g(k) - h(k)\\
  &\le f(k, \zeta) - f(k, z_m)\\
  &\le |f(k, \zeta) - f(k_m, z_m)| + |f(k_m, z_m) - f(k, z_m)|\\
  & < 2\epsilon.
\end{split}  
\end{align*}
Since $h$ is continuous and $\epsilon > 0$ is arbitrary, it follows that $g$ restricted to $K$ is continuous.
\end{proof}

\begin{corollary}\label{cor2}
The map $\vphi:G\rightarrow [0,\infty)$ defined by {\rm (\ref{group35})} is continuous.
\end{corollary}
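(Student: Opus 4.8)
The plan is to deduce Corollary \ref{cor2} as an immediate application of Lemma \ref{lem3}, simply by matching the data of the present situation to the hypotheses of that lemma. First I would set $Y = G$ and $Z = X$, and introduce the function $f : G \times X \rightarrow \R$ by $f(g, x) = m(gx, x)$. To invoke the lemma I must check its three hypotheses. Since $G$ is a compact topological group it is Hausdorff and, being compact, locally compact, so $Y = G$ is a locally compact Hausdorff space. Since $\{X, m\}$ is a compact metric space it is a compact Hausdorff space, so $Z = X$ qualifies. Finally, the continuity of $f$ has already been recorded in the discussion preceding (\ref{group31}): the maps $(g, x) \mapsto gx$ and $(g, x) \mapsto x$ are continuous, hence so is $(g, x) \mapsto (gx, x)$ from $G \times X$ into $X \times X$, and composing with the (continuous) metric $m$ on $X \times X$ shows that $f$ is continuous.

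With these verifications in place, Lemma \ref{lem3} applies directly and yields that the function $g \mapsto \sup\{f(g, x) : x \in X\}$ is continuous on $G$. But by the definition (\ref{group35}) this supremum is precisely $\vphi(g)$, and the remark already made there shows its values lie in $[0, \infty)$. Hence $\vphi : G \rightarrow [0, \infty)$ is continuous, which is the assertion of the corollary.

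Since the argument is essentially a translation of notation into the framework of Lemma \ref{lem3}, I do not anticipate any real obstacle. The only point meriting any attention is the continuity of $f(g, x) = m(gx, x)$, and that has already been dispatched in the text by combining continuity of the action with continuity of the metric.
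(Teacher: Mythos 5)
Your argument is correct and is exactly the paper's proof: apply Lemma \ref{lem3} with $Y = G$, $Z = X$, and $f(g,x) = m(gx,x)$, whose continuity was already noted. You simply spell out the hypothesis checks that the paper leaves implicit.
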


\begin{proof}
In Lemma \ref{lem3}, take the map $f:G\times X\to [0, \infty)$ given by $f(g,x)=m(gx, x)$.
\end{proof}

Our assumption that the $G$-action on $X$ is faithful is not restrictive.  Indeed, if the action were not faithful, then we could replace 
$G$ by $G/K$, where 
\begin{equation*}\label{group91}
K = \{g\in G: \vphi(g) = 0\},
\end{equation*}
which is easily seen to be a closed, normal subgroup of $G$.  Indeed, $k$ is in $K$ if and only if $kx = x$ for all $x$ in $X$.  If $g$ is in $G$ and $k$ is in $K$ then
\begin{equation*}\label{group95}
m(gkg^{-1}x, x) = m(gg^{-1}x, x) = m(x, x) = 0,
\end{equation*}
and therefore $K$ is normal in $G$.  If $gh^{-1}$ belongs to $K$ then (\ref{group71}) implies that
$\vphi(g) = \vphi(h)$.  In particular, $\vphi$ is constant on each coset $hK$.  Thus $\vphi$ induces a map $\overline{\vphi}:G/K\rightarrow [0,\infty)$ by defining $\overline{\vphi}(hK) = \vphi(h)$, and $G/K$ acts faithfully on $X$.

%%%%%%%%%%%%%%%%%%%%%%%%%%%%%%%%%%%%%%%%%%%%%%%%%%%%%%%%%%%%%%%%%%

\section{A generalization of Dirichlet's theorem}

Let $\mu$ denote a Haar measure on the Borel subsets of $G$ normalized  so that $\mu(G) = 1$.  We recall that for a compact group the Haar measure $\mu$ is both left and right invariant.  
That is, if $E\subseteq G$ is a Borel set and $g$ is in $G$, then $\mu(E) = \mu(gE) = \mu(Eg)$.  For $0 \le t$ we define the
distribution function
\begin{equation}\label{group245}
\Phi(t) = \mu\{g\in G: \vphi(g) < t\}.
\end{equation}
Clearly $\Phi(0) = 0$, and $t\mapsto \Phi(t)$ is nondecreasing.  If $0 < t$ then 
\begin{equation*}\label{group251}
\{g\in G: \vphi(g) < t\}
\end{equation*}
is a nonempty open set, and therefore it has positive Haar measure.  If $\lim_{t\rightarrow 0+} \vphi(t) > 0$ then it follows 
that $\{e\}$ is an open set and $G$ has the discrete topology.  As $G$ is assumed to be compact, we find that $G$ is finite in this case.

Next we prove the following result which is inspired by Blichfeldt's theorem \cite{blichfeldt1914} in the geometry of numbers (see also
\cite[Chapter III.2, Theorem I]{cassels1971}.)

\begin{theorem}\label{thm3}
Let $\A\subseteq G$ be a finite, nonempty set.  Write $|\A|$ for the cardinality of $\A$, and define
\begin{equation}\label{group261}
\delta(\A) = \min\{\vphi(ab^{-1}): a\in\A,\ b\in\A,\ \text{and $a\not= b$}\}.
\end{equation}
Then we have
\begin{equation}\label{group265}
\Phi\bigl(\tfrac12 \delta(\A)\bigr) \le |\A|^{-1}.
\end{equation}
If the metric $m$ is nonarchimedean then
\begin{equation}\label{group271}
\Phi\bigl(\delta(\A)\bigr) \le |\A|^{-1}.
\end{equation}
\end{theorem}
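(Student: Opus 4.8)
The plan is to run Blichfeldt's pigeonhole argument inside the group $G$, using Haar measure in place of Lebesgue measure and the subadditivity (\ref{group45}) of $\vphi$ in place of the triangle inequality. We may assume $|\A|\ge 2$, the case $|\A|=1$ being trivial. Put $c=\tfrac12\delta(\A)$ and let
\[
B=\{g\in G:\vphi(g)<c\}.
\]
By Corollary \ref{cor2} the map $\vphi$ is continuous, so $B$ is open, hence Borel, and $\mu(B)=\Phi\bigl(\tfrac12\delta(\A)\bigr)$ by (\ref{group245}). For each $a\in\A$ I would form the right translate $Ba$; it is Borel, and by the right-invariance of $\mu$ one has $\mu(Ba)=\mu(B)$.

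The key step is to show that the sets $Ba$, $a\in\A$, are pairwise disjoint. Suppose instead that $ga=hb$ for some $a,b\in\A$ and $g,h\in B$ with $a\ne b$. Then $ab^{-1}=g^{-1}h$, and using (\ref{group41}) and (\ref{group45}),
\[
\vphi(ab^{-1})=\vphi(g^{-1}h)\le\vphi(g^{-1})+\vphi(h)=\vphi(g)+\vphi(h)<2c=\delta(\A).
\]
Since $a\ne b$, the element $ab^{-1}$ occurs in the minimum defining $\delta(\A)$ in (\ref{group261}), so $\vphi(ab^{-1})\ge\delta(\A)$, a contradiction. With disjointness in hand, $\bigcup_{a\in\A}Ba$ is a disjoint union of Borel subsets of $G$, so
\[
|\A|\,\mu(B)=\sum_{a\in\A}\mu(Ba)=\mu\Bigl(\bigcup_{a\in\A}Ba\Bigr)\le\mu(G)=1,
\]
which is exactly (\ref{group265}). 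For the nonarchimedean assertion (\ref{group271}) I would repeat the argument with $B=\{g\in G:\vphi(g)<\delta(\A)\}$, invoking the strong inequality (\ref{group55}) instead of (\ref{group45}): now an overlap $ga=hb$ would give $\vphi(ab^{-1})\le\max\{\vphi(g),\vphi(h)\}<\delta(\A)$, again contradicting the definition of $\delta(\A)$, and the same measure count yields the conclusion.

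I do not expect a genuine obstacle here; the one point that must be handled correctly is the left/right bookkeeping. One has to translate $B$ on the right and exploit the right-invariance of Haar measure, precisely so that an overlap $Ba\cap Bb\ne\emptyset$ produces the quantity $\vphi(ab^{-1})$ with $a$ and $b$ literally elements of $\A$; translating on the left would instead produce $\vphi(b^{-1}a)$, which is not directly controlled by $\delta(\A)$ as it is defined in (\ref{group261}).
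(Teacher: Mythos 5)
Your proposal is correct and is essentially the same Blichfeldt-style pigeonhole argument as the paper's proof: translate the set $\{\vphi < \tfrac12\delta(\A)\}$ on the right by each element of $\A$, show the translates are pairwise disjoint via the subadditivity and symmetry of $\vphi$, and sum Haar measures. The only superficial difference is that you extract $ab^{-1}=g^{-1}h$ directly from an equation $ga=hb$, while the paper phrases the same overlap via a common point $h\in Va\cap Vb$ and writes $\vphi(ab^{-1})=\vphi(ah^{-1}\cdot hb^{-1})$; these are algebraically identical.
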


\begin{proof}
As $\vphi(g) = 0$ if and only if $g = e$, we may assume that $\delta(\A)$ is positive.
Let $V\subseteq G$ be the nonempty open set
\begin{equation*}\label{group275}
V = \big\{g\in G: \vphi(g) < \tfrac12 \delta(\A)\big\}.
\end{equation*} 
For each point $a$ in $\A$ we have
\begin{equation}\label{group281}
Va = \big\{ga\in G: \vphi(g) < \tfrac12 \delta(\A)\big\} = \big\{g\in G: \vphi(ga^{-1}) < \tfrac12 \delta(\A)\big\}.
\end{equation}
Assume that $a$ and $b$ are distinct points in $\A$ such that $Va\cap Vb$ contains a point $h$.  In view of (\ref{group281}) we have 
\begin{equation*}\label{group285}
\vphi(ha^{-1}) < \tfrac12 \delta(\A)\quad\text{and}\quad \vphi(hb^{-1}) < \tfrac12 \delta(\A).
\end{equation*}
Using (\ref{group41}) and (\ref{group45}) we also get
\begin{align*}\label{group291}
\begin{split}
\vphi(ab^{-1}) &= \vphi(ah^{-1}hb^{-1})\\
	                &\le \vphi(ha^{-1}) + \vphi(hb^{-1})\\
	                &< \tfrac12 \delta(\A) + \tfrac12 \delta(\A),
\end{split}	
\end{align*}
which is impossible.  It follows that the subsets $Va$ with $a\in \A$ are disjoint.  Using the right translation invariance 
of Haar measure we find that
\begin{equation*}\label{group295}
\sum_{a\in \A} \mu(Va) = |\A| \mu(V) = |\A| \Phi\bigl(\tfrac12 \delta(\A)\bigr) \le 1,
\end{equation*}
and this verifies (\ref{group265}).  

If $m$ is nonarchimedean then $\vphi$ also satisfies the nonarchimedean inequality (\ref{group55}).  Arguing as before, we are led to 
the inequality (\ref{group271}).
\end{proof}

\begin{corollary}
Let $a$ be an element of the compact group $G$ and define
\begin{equation}\label{group305}
\delta_N(a) = \min\{\vphi(a^n): 1\le n\le N\}.
\end{equation}
Then we have
\begin{equation}\label{group311}
\Phi\bigl(\tfrac12 \delta(a)\bigr) \le (N+1)^{-1}.
\end{equation}
If $m$ is nonarchimedean then
\begin{equation}\label{group315}
\Phi\bigl(\delta(a)\bigr) \le (N+1)^{-1}.
\end{equation}
\end{corollary}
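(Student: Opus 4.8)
The plan is to obtain this as a direct specialization of Theorem \ref{thm3}, applied to the finite set of powers of $a$. First I would put
\[
\A = \{a^n : 0 \le n \le N\} = \{e, a, a^2, \dots, a^N\},
\]
so that $|\A| \le N + 1$. The difference set $\{ab^{-1} : a, b \in \A,\ a \ne b\}$ consists precisely of the elements $a^i(a^j)^{-1} = a^{i-j}$ with $0 \le i, j \le N$ and $i \ne j$, and the exponent $i - j$ then ranges over $\{\pm 1, \pm 2, \dots, \pm N\}$. Since $\vphi(g^{-1}) = \vphi(g)$ by (\ref{group41}), we have $\vphi(a^{i-j}) = \vphi(a^{|i-j|})$, and taking $i = n$, $j = 0$ shows every value $\vphi(a^n)$ with $1 \le n \le N$ is attained; hence $\delta(\A) = \delta_N(a)$ as defined in (\ref{group305}).

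Next I would dispose of cardinality. If $\delta_N(a) = 0$ then the asserted inequalities (\ref{group311}) and (\ref{group315}) are immediate because $\Phi(0) = 0$. Otherwise $\vphi(a^n) > 0$, and therefore $a^n \ne e$, for every $n$ with $1 \le n \le N$; this forces the powers $e, a, \dots, a^N$ to be pairwise distinct, so that $|\A| = N+1$. Theorem \ref{thm3} then gives $\Phi\bigl(\tfrac12\delta(\A)\bigr) \le |\A|^{-1} = (N+1)^{-1}$, which is (\ref{group311}), and when $m$ is nonarchimedean the corresponding conclusion of Theorem \ref{thm3} gives $\Phi\bigl(\delta(\A)\bigr) \le (N+1)^{-1}$, which is (\ref{group315}).

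No serious obstacle arises here, as the statement is a corollary of Theorem \ref{thm3} in the expected way. The only point deserving a line of care is the degenerate situation in which $a$ has finite order at most $N$: then $\A$ has fewer than $N+1$ elements and the bound $|\A|^{-1}$ coming from Theorem \ref{thm3} is weaker than $(N+1)^{-1}$, but in that case $\delta_N(a) = 0$ and both inequalities hold trivially, so the conclusion is recovered in all cases.
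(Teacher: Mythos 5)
Your proof is correct and follows exactly the paper's approach: apply Theorem \ref{thm3} to $\A = \{a^n : 0 \le n \le N\}$ and use $\vphi(a^{-n}) = \vphi(a^n)$. You are slightly more careful than the paper in explicitly handling the degenerate case where $a$ has finite order at most $N$ (so that $|\A| < N+1$), but this is the same argument.
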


\begin{proof}
Apply the Theorem with $\A = \{a^n: n=0, 1, 2, \dots , N\}$, and use the fact that $\vphi(a^{-n}) = \vphi(a^n)$.
\end{proof}

Given an element $a\in G$ of infinite order, we could also consider the closure $H$ of $\langle a\rangle$ in $G$, which is a compact subgroup (\cite{stroppel2006}, 6.26).  Then we can apply the above corollary to $H$.  Of course, the normalized Haar measure on $H$ is not necessarily the restriction to $H$ of 
a Haar measure on $G$. 

%%%%%%%%%%%%%%%%%%%%%%%%%%%%%%%%%%%%%%%%%%%%%%%%%%%%%%%%%%%%%%%%

\section{$G$ is a group of isometries}

Using the metric $m$ on $X$, we may define the closed subgroup
\begin{equation}\label{group11}
I = \{g\in G: m(gx, gy) = m(x, y)\ \text{for all}\ (x, y)\in X\times X\}
\end{equation}
of all isometries in $G$.  If $g$ and $h$ are elements of $G$ which do not commute, then in general we do not expect to have 
$\vphi(gh) = \vphi(hg)$.  However, if either $g$ or $h$ is an element of the closed subgroup $I$ of all isometries in $G$, then we do have 
\begin{equation}\label{group75}
\vphi(gh) = \vphi(hg).
\end{equation}
For example, if $g$ is an isometry then
\begin{align}\label{group81}
\begin{split}
\vphi(gh) &= \sup\{m(ghx, x): x\in X\}\\
	&= \sup\{m(ghx,gg^{-1}x): x\in X\}\\
	&= \sup\{m(hx,g^{-1}x): x\in X\}\\
	&= \sup\{m(hgx,x): x\in X\}\\
	&= \vphi(hg).
\end{split}
\end{align}
Alternatively, if $g$ is an isometry then
\begin{equation}\label{group85}
\vphi(g h g^{-1}) = \vphi(h)
\end{equation} 
for all $h$ in $G$.  In particular, if $G = I$ is a group of isometries, then $\vphi : G \rightarrow [0, \infty)$ is constant on conjugacy classes.  In this case we may derive the following noncommutative form of Dirichlet's theorem on Diophantine approximation.

\begin{theorem}\label{thm4}
Assume that $G = I$, and let $a$ and $b$ belong to $G$.  For positive integers $M$ and $N$ define
\begin{equation}\label{group321} 
\delta_{M,N}(a,b) = \min\big\{\vphi\bigl(a^mb^n\bigr): |m|\le M, |n|\le N,\ \text{and}\ (m,n)\not= (0,0)\big\}.
\end{equation}
Then we have
\begin{equation}\label{group325}
\Phi\bigl(\tfrac12 \delta_{M,N}(a,b)\bigr) \le (M+1)^{-1}(N+1)^{-1}.
\end{equation}
If the metric $m$ is nonarchimedean, then
\begin{equation}\label{group331}
\Phi\bigl(\delta_{M,N}(a,b)\bigr) \le (M+1)^{-1}(N+1)^{-1}.
\end{equation}
\end{theorem}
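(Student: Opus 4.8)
The plan is to deduce Theorem~\ref{thm4} from Theorem~\ref{thm3} by applying the latter to the ``box'' of products
\[
\A = \bigl\{a^m b^n : 0 \le m \le M,\ 0 \le n \le N\bigr\} \subseteq G,
\]
which has at most $(M+1)(N+1)$ elements. First I would identify the relevant difference set: for indices in the stated ranges,
\[
a^m b^n \bigl(a^{m'} b^{n'}\bigr)^{-1} = a^m b^{\,n-n'} a^{-m'}.
\]
Since $G = I$, every element of $G$ is an isometry, so $\vphi(gh) = \vphi(hg)$ for all $g, h \in G$ by (\ref{group75}); equivalently $\vphi$ is constant on conjugacy classes by (\ref{group85}). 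Taking $g = a^m b^{\,n-n'}$ and $h = a^{-m'}$ gives
\[
\vphi\bigl(a^m b^n (a^{m'} b^{n'})^{-1}\bigr) = \vphi\bigl(a^{\,m-m'} b^{\,n-n'}\bigr),
\]
where $|m - m'| \le M$ and $|n - n'| \le N$. I regard this rewriting as the crux of the argument: it is exactly the hypothesis $G = I$ that allows the ``mixed'' product $a^m b^{\,n-n'} a^{-m'}$ to be put into the normal form $a^{\,m-m'} b^{\,n-n'}$, which is what lets us recognize its $\vphi$-value as one of the quantities controlled by $\delta_{M,N}(a,b)$. Absent this, the combinatorial comparison breaks down, which is the reason the three-matrix analogue of (\ref{intro49}) does not follow from the present method.

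One genuine subtlety is that the products $a^m b^n$ need not be distinct, so a priori $|\A|$ could be smaller than $(M+1)(N+1)$, and then Theorem~\ref{thm3} would only give a weaker bound. I would dispose of this by a case split. If $a^m b^n = a^{m'} b^{n'}$ for some $(m, n) \ne (m', n')$ in the box, then the identity above shows $\vphi(a^{\,m-m'} b^{\,n-n'}) = \vphi(e) = 0$ with $(m-m', n-n') \ne (0,0)$, $|m-m'| \le M$, and $|n-n'| \le N$; hence $\delta_{M,N}(a,b) = 0$, and since $\Phi(0) = 0$ the asserted inequalities (\ref{group325}) and (\ref{group331}) hold trivially. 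Otherwise $\A$ has exactly $(M+1)(N+1)$ elements.

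In that remaining case the $\vphi$-identity yields $\delta(\A) \ge \delta_{M,N}(a,b)$: any two distinct members of $\A$ arise from index pairs with $(m,n) \ne (m',n')$, so the $\vphi$-value of their quotient equals $\vphi(a^{\,m-m'} b^{\,n-n'}) \ge \delta_{M,N}(a,b)$ by the definition (\ref{group321}). Since $\Phi$ is nondecreasing, (\ref{group265}) of Theorem~\ref{thm3} then gives
\[
\Phi\bigl(\tfrac12 \delta_{M,N}(a,b)\bigr) \le \Phi\bigl(\tfrac12 \delta(\A)\bigr) \le |\A|^{-1} = (M+1)^{-1}(N+1)^{-1},
\]
which is (\ref{group325}). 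If the metric $m$ is nonarchimedean, I would replace the use of (\ref{group265}) by (\ref{group271}), thereby removing the factor $\tfrac12$ and obtaining (\ref{group331}).
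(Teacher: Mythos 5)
Your proposal is correct and takes essentially the same route as the paper: apply Theorem~\ref{thm3} to the box $\A = \{a^m b^n : 0 \le m \le M,\ 0 \le n \le N\}$, use the conjugation-invariance of $\vphi$ (which needs $G = I$) to reduce $\vphi$ of a quotient of two box elements to $\vphi(a^{m-m'}b^{n-n'})$, and handle the degenerate case where $\A$ is smaller than expected separately. The paper states the slightly stronger equality $\delta(\A) = \delta_{M,N}(a,b)$ where you only need (and prove) the inequality $\delta(\A) \ge \delta_{M,N}(a,b)$ together with monotonicity of $\Phi$, but this is an inessential difference; also note that in your degenerate case the conclusion $a^{m-m'}b^{n-n'}=e$ follows directly from $a^mb^n = a^{m'}b^{n'}$ without invoking the isometry property, though your route is equally valid.
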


\begin{proof}
If there exist integers $k$ and $l$, not both zero, such that $|k| \le M$, $|l| \le N$ and $a^kb^l = e$, then we have
$\delta_{M,N}(a,b) = 0$ and the result is obvious.  Therefore we define
\begin{equation*}\label{group335}
\A = \{a^{k}b^{l}: 0\le k\le M\ \text{and}\ 0\le l\le N\},
\end{equation*}
and we assume that $|\A| = (M+1)(N+1)$.  If $a^{k_1}b^{l_1}$ and $a^{k_2}b^{l_2}$ are distinct elements of $\A$ then, making use of
(\ref{group75}), we have
\begin{align*}\label{group341}
\begin{split}
\vphi\bigl(a^{k_1}b^{l_1}\bigl(a^{k_2}b^{l_2}\bigr)^{-1}\bigr) &= \vphi\bigl(a^{k_1}b^{l_1-l_2}a^{-k_2}\bigr)\\
	&= \vphi\bigl(a^{k_1-k_2}b^{l_1-l_2}\bigr). 
\end{split}
\end{align*}
This shows that $\delta_{M,N}(a,b)$ as defined by (\ref{group321}) is equal to $\delta(\A)$ as defined by (\ref{group261}).  Hence
the inequalities (\ref{group325}) and (\ref{group331}) both follow from Theorem \ref{thm3}.
\end{proof}

\section{The classical case of Dirichlet's Theorem}%%%%%%%%%%%%%%%%%%%%%%%%%%%%%%%%%%%

Let $\|\ \|:\R\rightarrow [0, \h]$ be defined by
\begin{equation*}\label{class1}
\|x\| = \min\{|x - n|: n\in \Z\},
\end{equation*}
so that $\|x\|$ is the distance from the real number $x$ to the nearest integer.  Clearly $x\mapsto \|x\|$ is constant on each
coset of the quotient group $\R/\Z$, and so we have $\|\ \|:\R/\Z\rightarrow [0, \h]$.  Then $(x, y)\mapsto \|x - y\|$ is a 
metric on $\R/\Z$ which induces its quotient topology.  More generally,
\begin{equation}\label{class2}
(\bx, \bwy) \mapsto \max\{\|x_l - y_l\|: 1 \le l \le L\}
\end{equation}
is a metric on the product $(\R/\Z)^L$ which induces its product topology.

Now suppose that $G = (\R/\Z)^L$.  Let $X = (\R/\Z)^L$ with metric defined by (\ref{class2}), and let $G$ act on $X$ by translation.  
Here the group is naturally written using additive notation, so that the action is given by $(\bg, \bx) \mapsto \bg + \bx$.  It is
trivial that this action is continuous.  In this case we find that
\begin{align}\label{class3}
\begin{split}
\vphi(\bg) &= \sup\big\{\max\{\|g_l + x_l - x_l\|; 1 \le l \le L\}: \bx \in X\big\}\\
           &= \max\{\|g_l\|: 1 \le l \le L\}.
\end{split}
\end{align}
And for $0 \le t \le \h$ we get
\begin{equation}\label{class4}
\Phi(t) = \mu\{\bg \in G: \vphi(\bg) < t\} = (2t)^L.
\end{equation}

Let $\balpha_1, \balpha_2, \dots , \balpha_M$ be nonzero points in $G$, and let $K_1, K_2, \dots , K_M$ be positive integers.
Then define
\begin{equation*}\label{class5}
\A = \big\{k_1\balpha_1 + k_2\balpha_2 + \cdots + k_M\balpha_M: 0 \le k_m \le K_m\big\}.
\end{equation*}
If
\begin{equation*}\label{class6}
|\A| < \prod_{m=1}^M (K_m + 1),
\end{equation*}
then it is trivial that there exists a nonzero integer vector $\bj$ in $\Z^M$ such that
\begin{equation*}\label{class7}
j_1\balpha_1 + j_2\balpha_2 + \cdots + j_M\balpha_M = \bo,
\end{equation*}
and $|j_m| \le K_m$ for each $m = 1, 2, \dots , M$.  Thus we assume that
\begin{equation*}\label{class8}
|\A| = \prod_{m=1}^M (K_m + 1).
\end{equation*}
As in the statement of Theorem \ref{thm3}, let
\begin{equation*}\label{class9}
\delta(\A) = \min\big\{\vphi(j_1\balpha_1 + j_2\balpha_2 + \cdots + j_M\balpha_M): \bj \not= \bo,\ \text{and}\ |j_m| \le K_m + 1\big\}.
\end{equation*}
Then the inequality (\ref{group265}) and (\ref{class4}) imply that
\begin{equation}\label{class10}
\delta(\A)^L \le \prod_{m=1}^M (K_m + 1)^{-1}.
\end{equation}
This is a slight generalization of Theorem VI in \cite[Chapter 1]{cassels1965}, and includes Dirichlet's
basic theorem on Diophantine approximation. 

%%%%%%%%%%%%%%%%%%%%%%%%%%%%%%%%%%%%%%%%%%%%%%%%%%%%%%%%%%%%%%%%%%%
\section{Proof of Theorem \ref{thmintro1} and Theorem \ref{thmintro2}}

Throughout this section we write $\mu$ for Haar measure on the Borel subsets of the unitary group $U(N)$ normalized so 
that $\mu\bigl(U(N)\bigr) = 1$.  We let $X$ denote the surface (\ref{intro5}) of the complex unit ball, and let
\begin{equation*}\label{one1}
\vphi : U(N) \rightarrow [0, 2]
\end{equation*}
denote the function defined by (\ref{intro17}).  We write
\begin{equation}\label{one5}
\Phi(t) = \mu\big\{A \in U(N) : \vphi(A) < t\big\}
\end{equation}
for the corresponding distribution function.

\begin{lemma}\label{lemone1}  Let $A$ be a matrix in the group $U(N)$.  If
\begin{equation*}\label{one15}
\{\alpha_n: n = 1, 2, \dots , N\} \subseteq \T
\end{equation*}
are the eigenvalues of $A$, then
\begin{equation}\label{one17}
\vphi(A)^2 = 2 - 2\min\{\Re(\alpha_n) : n = 1, 2, \dots , N\}. 
\end{equation}
\end{lemma}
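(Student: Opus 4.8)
The plan is to diagonalize $A$ and reduce the computation of $\varphi(A)^2 = \sup\{|A\bx - \bx|_2^2 : \bx \in X\}$ to an optimization over the probability simplex. First I would write $A = U D U^{*}$ with $U \in U(N)$ and $D = \operatorname{diag}(\alpha_1, \dots, \alpha_N)$, where $\alpha_n \in \T$ are the eigenvalues. Since $U$ is unitary it preserves the norm $|\cdot|_2$ and maps $X$ bijectively to $X$, so substituting $\bx = U\bwy$ gives
\begin{equation*}
|A\bx - \bx|_2 = |U D U^{*} U \bwy - U \bwy|_2 = |D\bwy - \bwy|_2,
\end{equation*}
and hence $\varphi(A) = \sup\{|D\bwy - \bwy|_2 : \bwy \in X\}$. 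This reduces the problem to the diagonal case.

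Next I would expand the diagonal expression coordinatewise. For $\bwy \in X$ with $|\bwy|_2 = 1$,
\begin{equation*}
|D\bwy - \bwy|_2^2 = \sum_{n=1}^N |\alpha_n y_n - y_n|^2 = \sum_{n=1}^N |\alpha_n - 1|^2 |y_n|^2,
\end{equation*}
and using $|\alpha_n - 1|^2 = (\alpha_n - 1)\overline{(\alpha_n - 1)} = 2 - 2\Re(\alpha_n)$ since $|\alpha_n| = 1$, this becomes $\sum_{n=1}^N \bigl(2 - 2\Re(\alpha_n)\bigr)|y_n|^2$. Setting $p_n = |y_n|^2$, the constraint $\bwy \in X$ says exactly that $(p_1, \dots, p_N)$ ranges over the full probability simplex (any such tuple is achieved, e.g. by $y_n = \sqrt{p_n}$), so
\begin{equation*}
\varphi(A)^2 = \sup\Bigl\{\sum_{n=1}^N \bigl(2 - 2\Re(\alpha_n)\bigr) p_n : p_n \ge 0,\ \textstyle\sum_n p_n = 1\Bigr\}.
\end{equation*}
A linear functional on the simplex attains its maximum at a vertex, so this supremum equals $\max_n \bigl(2 - 2\Re(\alpha_n)\bigr) = 2 - 2\min_n \Re(\alpha_n)$, which is the claimed identity $(\ref{one17})$.

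There is no serious obstacle here; the argument is a routine diagonalization followed by a one-line convexity observation. The only points requiring a word of care are that the supremum over $\bwy \in X$ is actually attained (which follows from compactness of $X$ and continuity, consistent with $(\ref{group35})$) and that every point of the probability simplex is realized by some $\bwy \in X$, so that no extremal configuration is lost in passing from $\bwy$ to the tuple $(|y_1|^2, \dots, |y_N|^2)$. I would state these two facts explicitly but not belabor them.
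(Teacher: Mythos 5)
Your proof is correct and closely parallels the paper's, with one small variation in the decomposition. The paper first writes $|A\bx-\bx|_2^2 = 2 - \bx^*(A^*+A)\bx$ and then applies the spectral theorem to the \emph{Hermitian} matrix $A^*+A$, whose eigenvalues are $2\Re(\alpha_n)$ because $A$ is normal; you instead diagonalize the \emph{unitary} matrix $A$ itself as $A = UDU^*$ and expand $|D\bwy-\bwy|_2^2$ coordinatewise. Both routes reduce to maximizing the linear functional $\sum_n (2-2\Re(\alpha_n))\,p_n$ over the probability simplex (the paper exhibits the maximizing vertex $\be_m$ directly rather than invoking convexity, but this is the same observation). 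Your version is marginally more transparent in that the substitution $\bx = U\bwy$ and the identity $|\alpha_n - 1|^2 = 2 - 2\Re(\alpha_n)$ make the reduction visibly coordinatewise, at the cost of invoking the spectral theorem for unitary (normal) operators rather than the slightly more elementary Hermitian case. You also correctly flag the two points of care — attainment of the supremum and surjectivity onto the simplex — which the paper handles implicitly by constructing the extremizer.
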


\begin{proof}  At each (column) vector $\bx$ in $X$, we have $|\bx|_2 = |A\bx|_2 = 1$.  It follows that
\begin{equation*}\label{one21}
|A\bx - \bx|_2^2 = \langle A\bx - \bx, A\bx - \bx \rangle = 2 - \bx^*(A^* + A)\bx,
\end{equation*}
where $\bx^*$ is the complex conjugate transpose of $\bx$, and similarly for $A^*$.  The matrix $A^* + A$ is self-adjoint.  Therefore,
by the spectral theorem, there exists an $N \times N$ unitary matrix $V$, and an $N \times N$ real diagonal matrix $D$, such that
\begin{equation*}\label{one23}
A^* + A = V^*D V,\quad\text{and}\quad D = [\omega_n],
\end{equation*} 
where 
\begin{equation*}\label{one25}
\{\omega_n : n = 1, 2, \dots , N\} = \{2\Re(\alpha_n) : n = 1, 2, \dots , N\}
\end{equation*}
are the eigenvalues of $A^* + A$.  Setting $\bwy = V\bx$ we find that $|\bwy|_2^2 = |\bx|_2^2 = 1$, and
\begin{align}\label{one27}
\begin{split}
|A\bx - \bx|_2^2 &= 2 - \bwy^*D\bwy\\
                             &= 2 - \sum_{n=1}^N \omega_n |y_n|^2\\ 
                             &\le 2 - 2\min\{\Re(\alpha_n) : n = 1, 2, \dots , N\}.
\end{split}                             
\end{align}
If
\begin{equation*}\label{one29}
\omega_m = \min\{\omega_n : n = 1, 2, \dots , N\},
\end{equation*}
then there is equality in the inequality (\ref{one27}) when $\bwy = \be_m$ is the $m$th standard basis vector.  As the vector
$V^*\be_m$ belongs to $X$, the identity (\ref{one17}) follows.
\end{proof}

If $z$ is a complex number we write $e(z) = e^{2\pi i z}$.

\begin{lemma}\label{lemone2}  Let $w$, $x$ and $y$ be real numbers such that $|w| \le \h$, $|x| \le \h$, and $|y| \le \h$.
Then we have
\begin{equation}\label{one33}
(2w)^2 \bigl|e(x) - e(y)\bigr|^2 \le \bigl|e(2wx) - e(2wy)\bigr|^2.
\end{equation}
\end{lemma}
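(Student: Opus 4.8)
The plan is to reduce the claimed inequality to a statement about a single real function of one variable. Writing $\theta = \pi(x-y)$, so that $|\theta| \le \pi/2$ since $|x|,|y| \le \h$, we have $|e(x) - e(y)|^2 = |e^{2\pi i x} - e^{2\pi i y}|^2 = 4\sin^2\theta$. Similarly $|e(2wx) - e(2wy)|^2 = 4\sin^2(2w\theta)$. So, after dividing by $4$, the inequality \eqref{one33} becomes
\begin{equation*}
(2w)^2 \sin^2\theta \le \sin^2(2w\theta).
\end{equation*}
Set $u = 2w$, so $|u| \le 1$. Since both sides are even in $\theta$ and in $u$, it suffices to prove $u^2\sin^2\theta \le \sin^2(u\theta)$ for $0 \le u \le 1$ and $0 \le \theta \le \pi/2$, equivalently $u\,|\sin\theta| \le |\sin(u\theta)|$, i.e. $u\sin\theta \le \sin(u\theta)$ (both quantities are nonnegative on this range, since $0 \le u\theta \le \pi/2$).

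The key step is the elementary inequality: for fixed $\theta \in [0,\pi/2]$, the function $h(u) = \sin(u\theta) - u\sin\theta$ satisfies $h(u) \ge 0$ on $[0,1]$. This follows from concavity of $t \mapsto \sin t$ on $[0,\pi/2]$: the chord from $(0, \sin 0)$ to $(\theta, \sin\theta)$ lies below the graph, so for $u \in [0,1]$ we have $\sin(u\theta) \ge (1-u)\sin 0 + u\sin\theta = u\sin\theta$. Alternatively one checks $h(0) = h$ at the endpoint $\theta \cdot 1$: $h(1) = 0$, $h(0) = 0$, and $h''(u) = -\theta^2\sin(u\theta) \le 0$ on $[0,1]$, so $h$ is concave and vanishes at both endpoints of $[0,1]$, hence $h \ge 0$ there. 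Squaring the resulting inequality $u\sin\theta \le \sin(u\theta)$ and multiplying by $4$ recovers \eqref{one33}.

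I do not expect any genuine obstacle here; the only point requiring a little care is the sign bookkeeping — ensuring that when we pass to absolute values and then to the substitution $u = 2w$, $\theta = \pi(x-y)$, all the arguments of the sines stay in a range ($[0,\pi/2]$) where $\sin$ is nonnegative and concave, so that squaring is reversible and the concavity argument applies directly. This is exactly what the hypotheses $|w|,|x|,|y| \le \h$ guarantee. An alternative to concavity is to write $\sin(u\theta) - u\sin\theta = \int_0^\theta \big(\cos(ut) - \cos t\big)\,u\,dt + \text{(boundary terms)}$, or simply to note $\frac{d}{du}\big(\sin(u\theta)/u\big) \le 0$, i.e. $\sin(u\theta)/u$ is decreasing in $u$, so its value at $u$ is at least its value at $u=1$; any of these routes closes the argument.
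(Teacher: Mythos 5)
Your proof is essentially correct and takes a genuinely more elementary route than the paper's, but it contains a range error that happens not to be fatal, and you should be aware of it.

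The slip: with $\theta = \pi(x-y)$ and $|x|, |y| \le \tfrac12$, the triangle inequality only gives $|x-y| \le 1$, so $|\theta| \le \pi$, not $|\theta| \le \pi/2$ as you claim. Similarly $|u\theta| \le \pi$, not $\pi/2$. Your concavity/chord argument and the monotonicity of $\sin(u\theta)/u$ are stated for $[0, \pi/2]$, so as written the proof does not cover the full range of $\theta$. Fortunately $\sin$ is nonnegative and concave on all of $[0,\pi]$ (since $\sin'' = -\sin \le 0$ there), so every step of your argument — the chord inequality $\sin(u\theta) \ge u \sin \theta$, the sign of $h''(u) = -\theta^2 \sin(u\theta)$, and the nonnegativity needed to square — goes through verbatim on the correct interval. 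You should simply replace $\pi/2$ by $\pi$ throughout the bookkeeping.

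As for comparison with the paper: the reduction to $(2w)^2 \sin^2 \pi(x-y) \le \sin^2 \pi(2w(x-y))$ is the same, but the paper then deduces the needed monotonicity of $z \mapsto \sin(\pi z)/(\pi z)$ on $[0,1]$ from the Euler product $\sin(\pi z)/(\pi z) = \prod_{n\ge 1}(1 - z^2/n^2)$, observing that each factor is nonnegative and decreasing in $|z|$ on $[0,1]$. Your argument derives the same monotonicity (equivalently, $u \sin\theta \le \sin(u\theta)$) from concavity of $\sin$ on $[0,\pi]$, which is more elementary — it uses only $\sin'' \le 0$ rather than the infinite product. Both proofs hinge on exactly the same one-variable inequality; yours just reaches it with lighter machinery. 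Note also that the alternative you mention, $\sin(u\theta)/u$ decreasing in $u$, is literally the paper's statement after the substitution $t = \pi z$, so in that variant you are proving the identical lemma, only by differentiation instead of the product formula.
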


\begin{proof}  If $u$ and $v$ are real numbers such that $0 \le |u| \le |v| \le 1$, then it follows from the convergent infinite product
\begin{equation}\label{one35}
\biggl(\frac{\sin \pi z}{\pi z}\biggr) = \prod_{n=1}^{\infty}\biggl(1 - \frac{z^2}{n^2}\biggr),
\end{equation}
that
\begin{equation*}\label{one37}
0 \le \biggl(\frac{\sin \pi v}{\pi v}\biggr) \le \biggl(\frac{\sin \pi u}{\pi u}\biggr).
\end{equation*}
By hypothesis,
\begin{equation*}
0 \le |2wx - 2wy| \le |x-y| \le 1.
\end{equation*}
Therefore we get
\begin{equation*}\label{one38}
0 \le \biggl(\frac{\sin \pi (x - y)}{\pi (x - y)}\biggr) \le \biggl(\frac{\sin \pi (2wx -2wy)}{\pi (2wx - 2wy)}\biggr),
\end{equation*}
and
\begin{equation}\label{one39}
(2w)^2 \bigl(\sin \pi (x-y)\bigr)^2 \le \bigl(\sin \pi (2wx-2wy)\bigr)^2.
\end{equation}
Then (\ref{one39}) is equivalent to (\ref{one33}).
\end{proof}

\begin{lemma}\label{lemone3}  If $0 < t \le 2$ then the distribution function $\Phi$ defined by {\rm (\ref{one5})}, satisfies the inequality
\begin{equation}\label{one45}
\biggl(\frac{t}{\pi}\biggr)^{N^2} \le \Phi(t).
\end{equation}
\end{lemma}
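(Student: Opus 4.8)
The plan is to turn $\Phi(t)$ into an explicit integral over the eigenvalue arguments via the Weyl integration formula, and then to bound that integral from below by degrading each Vandermonde factor with Lemma~\ref{lemone2}. The first step is spectral: write the eigenvalues of $A$ as $e(\theta_1), \dots, e(\theta_N)$ with representatives $\theta_n \in [-\hh,\hh]$. Lemma~\ref{lemone1} together with the identity $2 - 2\cos 2\pi\theta = 4\sin^2 \pi\theta$ gives $\vphi(A)^2 = 4\max_n \sin^2\pi\theta_n$. Hence for $0 < t \le 2$ the condition $\vphi(A) < t$ is equivalent to $|\sin\pi\theta_n| < t/2$ for all $n$, and since $|\theta_n| \le \hh$ this is in turn equivalent to $|\theta_n| < s$ for all $n$, where
\begin{equation*}
s = \frac{1}{\pi}\arcsin\!\left(\frac{t}{2}\right) \in \left(0, \tfrac12\right].
\end{equation*}

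Next I would invoke the Weyl integration formula for $U(N)$: the push-forward of the normalized Haar measure $\mu$ under the (conjugation-invariant) eigenvalue map has density $(N!)^{-1}\prod_{j<k}|e(\theta_j) - e(\theta_k)|^2$ with respect to Lebesgue measure on $[-\hh,\hh]^N$, the normalizing constant $N!$ being exactly the value of $\int_{[-\hh,\hh]^N}\prod_{j<k}|e(\theta_j)-e(\theta_k)|^2\,d\theta_1\cdots d\theta_N$ forced by the requirement that the total mass equal $1$. Since $\{A : \vphi(A) < t\}$ pulls back to $\{(\theta_1,\dots,\theta_N) : |\theta_n| < s \ \text{for all}\ n\}$, this gives
\begin{equation*}
\Phi(t) = \frac{1}{N!}\int_{[-s,s]^N}\prod_{j<k}|e(\theta_j) - e(\theta_k)|^2\,d\theta_1\cdots d\theta_N.
\end{equation*}

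Now I would substitute $\theta_j = 2s\,u_j$, so that each $u_j$ ranges over $[-\hh,\hh]$ and $d\theta_j = 2s\,du_j$, and apply Lemma~\ref{lemone2} with $w = s$ (legitimate since $0 < s \le \hh$) to each of the $N(N-1)/2$ factors, obtaining $|e(2su_j) - e(2su_k)|^2 \ge (2s)^2|e(u_j) - e(u_k)|^2$. Pulling the powers of $2s$ (namely $N$ from the Jacobian and $N(N-1)$ from the factors) out of the integral leaves precisely the full-cube integral that equals $N!$, so
\begin{equation*}
\Phi(t) \ge \frac{(2s)^N (2s)^{N(N-1)}}{N!}\int_{[-\hh,\hh]^N}\prod_{j<k}|e(u_j) - e(u_k)|^2\,du_1\cdots du_N = (2s)^{N^2}.
\end{equation*}

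Finally, since $\arcsin x \ge x$ for $x \in [0,1]$ we have $2s = \tfrac{2}{\pi}\arcsin(t/2) \ge t/\pi$, whence $\Phi(t) \ge (2s)^{N^2} \ge (t/\pi)^{N^2}$, which is the inequality~(\ref{one45}). The only ingredient beyond Lemmas~\ref{lemone1} and~\ref{lemone2} is the Weyl integration formula together with its normalizing constant; the point that needs care is that Lemma~\ref{lemone2} must be applied in the correct direction, so that $|e(2su_j)-e(2su_k)|$ is estimated from below in terms of $|e(u_j)-e(u_k)|$, and that the resulting exponent $N + N(N-1) = N^2$ is exactly the dimension of $U(N)$, matching the target power.
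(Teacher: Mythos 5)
Your proof is correct and follows essentially the same route as the paper: translate $\vphi(A)<t$ into a box condition $|\theta_n|<s$ on eigenangles via Lemma~\ref{lemone1}, apply the Weyl integration formula to write $\Phi(t)$ as a Vandermonde integral over the box, rescale by the substitution $\theta = 2s\,u$ and use Lemma~\ref{lemone2} to compare against the full-cube integral (which equals $N!$), and finish with the elementary bound $2s \ge t/\pi$. The only cosmetic differences are that the paper parametrizes via $t = 2\sin\pi w$ rather than $s = \tfrac{1}{\pi}\arcsin(t/2)$ (the same number), and that the paper justifies $\tfrac{1}{N!}\int_{C_N}|E|^2 = 1$ by Parseval rather than by the Weyl normalization; otherwise the two arguments coincide step for step.
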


\begin{proof}  Let $\bx$ be a (column) vector in $\R^N$ with coordinates $x_1, x_2, \dots , x_N$, and let
$E : \R^N \rightarrow \C$ be defined by the Vandermonde determinant
\begin{equation*}
E(\bx) = \det\bigl(e\bigl((n-1) x_m\bigr)\bigr) = \prod_{1 \le m < n \le N} \bigl(e(x_n) - e(x_m)\bigr).
\end{equation*}  
As each function $x \mapsto e(x)$ is periodic with period $1$, the function $E$ is well defined on the compact quotient group
$(\R/\Z)^N$.  Let $\nu$ denote Haar measure on $(\R/\Z)^N$ normalized so that $\nu\bigl((\R/\Z)^N\bigr) = 1$.  Using the 
determinant representation for $E(\bx)$, we get the Fourier expansion
\begin{equation*}\label{one49}
E(\bx) = \sum_{\sigma \in S_N} \sgn(\sigma) \prod_{m = 1}^N e\bigl((\sigma(m) - 1) x_m\bigr),
\end{equation*}
where the sum is over the collection of all permutations 
\begin{equation*}
\sigma : \{1, 2, \dots , N\} \rightarrow \{1, 2, \dots , N\}
\end{equation*}
in the symmetric group $S_N$.  Then using Parseval's identity we find that
\begin{equation}\label{one53}
\frac{1}{N!} \int_{(\R/\Z)^N} |E(\bx)|^2\ \dnu(\bx) = 1.
\end{equation}

Let $F : U(N) \rightarrow \C$ be an integrable function with respect to the Haar measure $\mu$.  Assume that $F$ is constant on
conjugacy classes, so that
\begin{equation*}\label{one54}
F\bigl(B^{-1}AB\bigr) = F(A)
\end{equation*}
for all $A$ and $B$ in $U(N)$.  We recall that $F$ is constant on conjugacy classes
if and only if $F(A)$ depends only on the eigenvalues of $A$.  Let $A$ be a matrix in $U(N)$ with eigenvalues
\begin{equation*}\label{one57}
\{\alpha_n: n = 1, 2, \dots , N\} \subseteq \T.
\end{equation*}
Let $\bx = (x_n)$ be the unique point in $(\R/\Z)^N$ such that $\alpha_n = e(x_n)$.  If $F : U(N) \rightarrow \C$ is constant on 
conjugacy classes, then $F$ induces a function $\widetilde{F} : (\R/\Z)^N \rightarrow \C$, by $F(A) = \widetilde{F}(\bx)$.
Moveover, the function $\widetilde{F}$ is integrable with respect to Haar measure $\nu$,
and by the Weyl integration formula (see \cite[Chapter 5]{katz1999}) we have
\begin{equation}\label{one51}
\int_{U(N)} F(A)\ \dmu(A) = \frac{1}{N!} \int_{(\R/\Z)^N} \widetilde{F}(\bx) |E(\bx)|^2\ \dnu(\bx).
\end{equation}

Again let $A$ be a matrix in $U(N)$ with eigenvalues
\begin{equation*}
\{\alpha_n: n = 1, 2, \dots , N\} \subseteq \T,
\end{equation*}
and let $\bx = (x_n)$ be the point in $(\R/\Z)^N$ such that $\alpha_n = e(x_n)$.  By Lemma \ref{lemone1} the inequality $\vphi(A) < t$ 
holds if and only if 
\begin{equation*}\label{one69}
1 - \hh t^2 < \Re(\alpha_n) = \cos 2\pi x_n
\end{equation*}
for each $n = 1, 2, \dots , N$.  Therefore, if
\begin{equation*}\label{one73}
K(t) = \big\{\bx \in (\R/\Z)^N : 1 - \hh t^2 < \cos 2\pi x_n\big\},
\end{equation*}
then by Lemma \ref{lemone1} and the Weyl integration formula
(\ref{one51}), we have
\begin{equation*}\label{one77}
\Phi(t) = \mu\big\{A \in U(N) : \vphi(A) < t\big\} = \frac{1}{N!} \int_{K(t)} |E(\bx)|^2\ \dnu(\bx).
\end{equation*}
Alternatively, let $w$ be the unique real number such that $0 < w \le \h$,
\begin{equation*}\label{one81}
1 - \hh t^2 = \cos 2\pi w,\quad\text{and}\quad t = 2 \sin \pi w.
\end{equation*}
We will use the unit cube
\begin{equation*}\label{one85}
C_N = \{\bwy \in \R^N : |y_n| \le \hh\}
\end{equation*}
as a fundamental domain for the quotient group $(\R/\Z)^N$.  Then we have
\begin{equation*}\label{one86}
K(t) \cap C_N = \big\{\bwy \in \R^N : |y_n| < w\big\},
\end{equation*}
and
\begin{equation}\label{one87}
\Phi(t) = \frac{1}{N!} \int_{\{\bwy : |y_n| < w\}} |E(\bwy)|^2\ \dy.
\end{equation}
The inequality (\ref{one33}), implies that
\begin{align}\label{one89}
\begin{split}
(2w)^{N^2 - N} |E(\bwy)|^2 &= \prod_{1 \le m < n \le N} (2w)^2 \bigl|e(y_n) - e(y_m)\bigr|^2 \\
                                            &\le \prod_{1 \le m < n \le N} \bigl|e(2wy_n) - e(2wy_m)\bigr|^2 \\
                                            &= E(2w\bwy)
\end{split}
\end{align}
at each point $\bwy$ in $C_N$.  It follows that
\begin{align}\label{one93}
\begin{split}
(2w)^{N^2} &= \frac{(2w)^N}{N!} \int_{C_N} (2w)^{N^2 - N} |E(\bwy)|^2\ \dy\\
                     &\le \frac{(2w)^N}{N!} \int_{C_N} |E(2w\bwy)|^2\ \dy\\
                     &= \frac{1}{N!} \int_{\{\bx : |x_n| < w\}} |E(\bx)|^2\ \dx\\
                     &= \Phi(t).
\end{split}
\end{align}
To complete the proof we use the elementary inequality
\begin{equation}\label{one97}
t = 2 \sin \pi w \le 2\pi w.
\end{equation}
The inequality (\ref{one45}) plainly follows from (\ref{one93}) and (\ref{one97}).
\end{proof}

Let $\A \subseteq U(N)$ be a finite, nonempty subset of cardinality $|\A| \ge 2$.  Combining the inequalities 
(\ref{group265}) and (\ref{one45}), we conclude that
\begin{equation*}\label{one105}
\biggl(\frac{\delta(\A)}{2 \pi} \biggr)^{N^2} \le \Phi\bigl(\hh\delta(\A)\bigr) \le |\A|^{-1},
\end{equation*}
and this verifies Theorem \ref{thmintro1}.

Similarly, let $A$ and $B$ be elements of $U(N)$, and let  $\delta_{J,K}(A, B)$ be defined by (\ref{intro43}).  Then 
(\ref{group331}) implies that
\begin{equation*}\label{one111}
\biggl(\frac{\delta_{J, K}(A, B)}{2 \pi}\biggr)^{N^2} \le (J + 1)^{-1} (K + 1)^{-1},
\end{equation*}
and this proves the inequality (\ref{intro49}) in the statement of Theorem \ref{thmintro2}.

%%%%%%%%%%%%%%%%%%%%%%%%%%%%%%%%%%%%%%%%%%%%%%%%%%%%%%%%%%%%%%%%%%%

\end{document}